\documentclass[12pt,reqno]{amsart}
\setlength{\textwidth}{6.0in}
\setlength{\textheight}{8.6in}
\setlength{\oddsidemargin}{4mm}
\setlength{\evensidemargin}{4mm}
\setlength{\footskip}{12mm}
\usepackage{amsfonts}
\usepackage{amssymb}
\usepackage{color}
\usepackage{dsfont}
\numberwithin{equation}{section}
\pagestyle{plain}
\theoremstyle{plain}
 \newtheorem{theorem}{Theorem}[section]
 
 \newtheorem{corollary}[theorem]{Corollary}
 \newtheorem{proposition}[theorem]{Proposition}

\theoremstyle{definition}
 \newtheorem{definition}[theorem]{Definition}
 \newtheorem{example}[theorem]{Example}
 \newtheorem{remark}[theorem]{Remark}


\newcommand{\Real}{\mathrm{Re}\,}
\newcommand{\I}{\mathrm{Im}\,}

%

\newcommand{\bN}{\mathbb{N}}
\newcommand{\bZ}{\mathbb{Z}}
\newcommand{\bR}{\mathbb{R}}

\newcommand{\bC}{\mathbb{C}}

\newcommand{\supp}{\mbox{\rm supp }}


\newcommand{\ind}{\textrm{ind}}
\newcommand{\rom}[1]{$\underline{\overline{\text{#1}}}$}


\newcommand{\one}{\mathbf{1}}

\setlength{\baselineskip}{22pt}
\setlength{\parindent}{1.8pc}
\allowdisplaybreaks

\begin{document}

\vspace{5mm}
\begin{center}
{\bf
{\large
On Quasi-Infinitely Divisible Distributions with a Point Mass}}

\vspace{5mm}

David Berger\\
\end{center}

\vspace{5mm}
An infinitely divisible distribution on $\mathbb{R}$ is a probability measure $\mu$ such that the characteristic function  $\hat{\mu}$ has   a L\'{e}vy-Khintchine representation with characteristic triplet $(a,\gamma, \nu)$, where $\nu$ is a L\'{e}vy measure, $\gamma\in\mathbb{R}$ and $a\ge 0$. A natural extension of such distributions are quasi-infinitely distributions. Instead of a L\'{e}vy measure, we assume that $\nu$ is a "signed L\'{e}vy measure", for further information on the definition see [\ref{Lindner}]. We show that a distribution $\mu=p\delta_{x_0}+(1-p)\mu_{ac}$ with $p>0$ and $x_0\in\bR$, where $\mu_{ac}$ is the absolutely continuous part, is quasi-infinitely divisible if and only if $\hat{\mu}(z)\neq0$ for every $z\in\mathbb{R}$. 
We apply this to show that certain variance mixtures of mean zero normal distributions are quasi-infinitely divisible distributions, and we give an example of a quasi-infinitely divisible distribution that is not continuous but has infinite quasi-L\'{e}vy measure.
Furthermore, it is shown that replacing the signed L\'{e}vy measure by a seemingly more general complex L\'{e}vy measure does not lead to new distributions.  Last but not least it is proven that the class of quasi-infinitely divisible distributions is not open, but path-connected in the space of probability measures with the Prokhorov metric.

\section{Introduction}
The class of infinitely divisible distributions is an important class of distributions, since they correspond in a natural way to L\'{e}vy processes. It is well known that infinitely divisible distributions are characterized by the L\'{e}vy-Khintchine formula in the sense that a distribution $\mu$ is infinitely divisible if and only if there exist $a\ge 0$, $\gamma \in\bR$ and a L\'{e}vy measure $\nu$ such that
\begin{align}\label{lkr}
\hat{\mu}(z)=\exp\left(-\frac{1}{2}az^2+i\gamma z+\int\limits_{\bR} \left(e^{ixz}-1-ixz\one_{[-1,1]}(x)\right)\nu(dx)\right)
\end{align}
for each $z$, where $\hat{\mu}$ denotes the characteristic function of $\mu$.\\
The class of quasi-infinitely divisible distributions generalizes the class of infinitely divisible distributions. By definition, a probability distribution $\mu$ is quasi-infinitely divisible if and only if its characteristic function admits a L\'{e}vy-Khintchine representation (\ref{lkr}), but with $a,\gamma \in\bR$ and $\nu$ being a quasi-L\'{e}vy measure, meaning informally that $\nu$ is a "signed L\'{e}vy measure". See [\ref{Lindner}] and Section 2 below for the precise defintions.
It is easily seen that a distribution $\mu$ is quasi-infinitely divisible if and only if its characteristic function is the quotient of the characteristic functions of two infinitely divisible distributions, equivalently if there exist two infinitely divisible distributions $\mu_1,\mu_2$ such that $\mu_1\ast \mu=\mu_2$. Hence, quasi-infinitely divisible distributions appear naturally in the study of factorisation of infinitely divisible distributions.\\
Applications of quasi-infinitely divisible distributions can be found in physics, see [\ref{Demni1} and \ref{Demni2}], and in insurance mathematics, see [\ref{Zhang}].\\
Although examples of quasi-infinitely divisible distributions have appeared before in the literature (e.g. [\ref{Cuppens2}] and  [\ref{Linnik2}]), a first step to a systematic treatment of these distributions has only been given recently by Lindner et al. [\ref{Lindner}]. They showed in particular that the class of quasi-infinitely divisible distributions is dense in the class of probability distributions with respect to weak convergence, and using the Wiener-L\'{e}vy theorem they showed that a discrete distribution $\mu$ concentrated on a lattice of the form $h\bZ +r$ with $h>0$, $r\in\bR$, is quasi-infinitely divisible if and only if its characteristic function $\hat{\mu}$ does not have zeroes on the real line. They also gave an example of a distribution whose characteristic function has no real zeroes, but such that the distribution nevertheless was not quasi-infinitely divisible. They also studied various distributional properties of quasi-infinitely divisible distributions in terms of the characteristic triplet. Another important class of quasi-infinitely divsible distributions was established  much earlier by Cuppens [\ref{Cuppens1}, Proposition 1; \ref{Cuppens2} Theorem 4.3.7]. He showed that any probability distribution $\mu$ that has an atom of mass greater than $1/2$ is quasi-infinitely divisible.\\
The goal of this paper is to obtain a further class of quasi-infinitely divisible distributions. A main result in this direction will be that a distribution $\mu$ of the form $\mu(dx)=p\delta_{x_0}(dx)+(1-p)f(x)\lambda(dx)$, where $p\in (0,1]$, $x_0\in\bR$, and $f$ being a Lebesgue density, is quasi-infinitely divisible if and only if  its characteristic function has no zeroes on the real line. This can then be seen on the one hand as a counter part to the above mentioned result by Cuppens, and on the other to the above mentioned result by Lindner et al. Its proof makes use of a Wiener-L\'{e}vy theorem due to Krein [\ref{Krein}] for a specific Banach algebra. As a byproduct of our result, we find a quasi-infinitely divisible distribution that is not continuous but has infinite quasi-L\'{e}vy measure, thus answering an open question in [\ref{Lindner}, Open Question 7.2] in the negative. We also show that convex combinations of $N(0,a_i)$-distributions, or more generally variance mixtures of mean zero normal distributions are quasi-infinitely divisible, provided that the lower endpoint $t_1$ of the mixing distribution $\varrho$ is strictly positive and that $\varrho(\{t_1\})>0$. We also treat quasi-infinite divisibility for distributions whose singular part $\mu_d$ is supported on a lattice and such that $\hat{\mu_d}$ has no zeroes on the real line.\\
The results mentioned above can be found in Section 4. In Section 2, we recall basic notation and the formal definition of quasi-infinitely divisible distributions. In Section 3, we address the question if it also makes sense to look at probability distributions $\mu$ whose characteristic function have a L\'{e}vy-Khintchine type representation with a "complex L\'{e}vy measure", and show that this does not lead to a new class, i.e. that no probability distribution exists such that the L\'{e}vy measure in the L\'{e}vy-Khintchine type representation of its characteristic function has a non-zero imaginary part. This result will then be used intensively in the proofs for Section 4. Finally, in Section 5 we show that the complement of the class of quasi-infinitely divisible distributions is also dense with repect to weak convergence, and that the set of quasi-infinitely divisible distributions is path-connected with respect to the Prokhorov topology. This sheds some further light on the topological properties of this class of distributions.
\newline
\section{Notation and Preliminaries}
To fix notation, by a distribution on $\bR$ we mean a probability measure on $(\bR,\mathcal{B})$ with $\mathcal{B}$ being the Borel $\sigma-$algebra on $\bR$, and similarly, by a signed measure on $\bR$ we mean it to be defined on $(\bR,\mathcal{B})$. By a measure on $\bR$ we always mean a positive measure on $(\bR,\mathcal{B})$, i.e. an $[0,\infty]$-valued $\sigma-$additive set function on $\mathcal{B}$ that assigns the value $0$ to the empty set. The Dirac measure at a point $b\in \bR$ will be denoted by $\delta_{b}$, the Gaussian distribution with mean $a\in\bR$ and variance $b\ge 0$ by $N(a,b)$ and the Lebesgue measure by $\lambda(dx)$. Weak convergence of measures will be denoted by "$\stackrel{d}{\to}$" and the Fourier transform at $z\in\bR$ of a finite complex measure $\mu$ on $\bR$ will be denoted by $\hat{\mu}(z)=\int_\bR e^{ixz}\,\mu(dx)$. The convolution of two complex measures $\mu_1$ and $\mu_2$ on $\bR$ is defined by $\mu_1\ast \mu_2(B) =\int_\bR \mu_1(B-x)\,\mu_2(dx)$, $B\in\mathcal{B}$, where $B-x=\{y-x|\,y\in B\}$. The law of a random variable $X$ will be dentoted by $\mathcal{L}(X)$. The real and imaginary part of a complex number $z$ will be denoted by $\Real z$  and  $\I z$, respectively, the imaginary unit will be denoted by $i$. We write $\bN=\{1,2,\dotso\}$, $\bN_0=\bN\cup \{0\}$ and $\bZ,\,\bR,\,\bC$ for the set of integers, real numbers and complex numbers, respectively. The indicator function of a set $A\subset \bR$ is denoted by $\one_A$. By $L^1(\bR, A)$ for $A\subset \bC$ we denote the set of all Borel-measurable functions $f:\bR \to A$ such that $\int_\bR |f(x)|\,\lambda(dx)<\infty$.

Informally, a quasi-L\'{e}vy type measure is the difference of two L\'{e}vy measures. This however is not always a signed measure, which is why the definition is slightly different. Let us recall the following definitions of [\ref{Lindner}]:
\begin{definition}\label{qm}\mbox{}\\
{\it a)} Let $\mathcal{B}_r:=\{B\in \mathcal{B}|\, B\cap (-r,r)=\emptyset\}$ for $r>0$ and $\mathcal{B}_0:=\cup_{r>0}\mathcal{B}_r$ be the class of all Borel sets that are bounded away from zero. Let $\nu:\mathcal{B}_0\to\bR$ be a function such that $\nu|_{\mathcal{B}_r}$ is a finite signed measure for each $r>0$ and denote the total variation, positive and negative part of $\nu_{|\mathcal{B}_R}$ by $|\nu_{|\mathcal{B}_r}|,\nu^{+}_{|\mathcal{B}_r}$ and $\nu^{-} _{|\mathcal{B}_r}$, respectively. Then the total variation $|\nu|$, the positive part $\nu^{+}$  and the negative part $\nu^{-}$ of $\nu$ are defined to be the unique measures on $(\bR,\mathcal{B})$ satisfying
\begin{align*}
|\nu|(\{0\})=\nu^{+}(\{0\})=\nu^{-}(\{0\})=0
\end{align*}
and
\begin{align*}
|\nu|(A)=|\nu_{|\mathcal{B}_r}|(A),\,\nu^{+}(A)=\nu^{+}_{|\mathcal{B}_r}(A),\,\nu^{-}(A)=\nu^{-} _{|\mathcal{B}_r}(A)
\end{align*}
when $A\in \mathcal{B}_r$ for some $r>0$.\\ 
{\it b)} A \emph{quasi-L\'{e}vy type measure} is a function satisfying the condition of {\it a)} such that its total variation $|\nu|$ satisfies $\int_\bR (1\wedge x^2 )|\nu|(dx)<\infty.$
\end{definition}
\begin{definition}\label{def1}{\it a)} Let $\mu$ be a probability distribution on $\bR$. We say that $\mu$ is \emph{quasi-infinitely divisible} if its characteristic function has a representation
\begin{align*}
\hat{\mu}(z)=\exp\left( -\frac12 az^2+ i\gamma z+\int\limits_{\bR}(e^{ixz}-1-ixz\one_{[-1,1]}(x))\nu(dx) \right),
\end{align*}
where $a,\gamma \in\bR$ and $\nu$ a quasi-L\'{e}vy-type measure. The \emph{characteristic triplet} $(a,\gamma,\nu)$ of $\mu$ is unique (see [\ref{Sato}, Exercise 12.2]), and $a$ is called the \emph{Gaussian variance} of $\mu$.\\
{\it b)} A quasi-L\'{e}vy type measure $\nu$ is called \emph{quasi-L\'{e}vy measure}, if additionally there exist a quasi-infinitely divisible distribution $\mu$  and some $a,\gamma \in\bR$ such that $(a,\gamma,\nu)$ is the characteristic triplet of $\mu$. We call $\nu$ the \emph{quasi-L\'{e}vy measure of $\mu$}.\\
{\it c)} Let $\mu$ be quasi-infinitely divsible with characteristic triplet $(a,\gamma,\nu)$. If $\int_{[-1,1]} |z| \,|\nu|(dx)<\infty$,  then we call $\gamma_0:= \gamma-\int_{[-1,1]} z\,\nu(dz)$ the \emph{drift} of $\mu$. In that case, the characteristic function of $\mu$ allows the representation
\begin{align*}
\hat{\mu}(z)=\exp\left(-\frac{1}{2}az^2+i\gamma_0 z+\int\limits_{\bR} (e^{ixz}-1)\nu(dx)\right).
\end{align*}
\end{definition}

\section{Complex quasi-L\'{e}vy Type measures}
As stated in Definition \ref{def1}, a quasi-infinitely divisible distribution is a probability distribution $\mu$ whose characteristic function admits a L\'{e}vy-Khintchine type representation with a quasi-L\'{e}vy-type measure. It is natural to ask if there are further distributions whose characteristic function allows a L\'{e}vy-Khintchine type representation with a complex quasi-L\'{e}vy-type measure. Theorem \ref{theorem1} below shows that this is not the case.  But before that we need a precise definition:
\begin{definition}\label{cqltm}\mbox{}\\
A \emph{complex quasi-L\'{e}vy type measure} is a function $\nu:\mathcal{B}_0\to\bC$ such that $\Real \nu$ and $\I \nu$ are quasi-L\'{e}vy type measures.
\end{definition}
The integral of a function $f:\bR\to\bC$ satisfying $|f(x)|\le C(x^2\wedge 1)$ for some constant $C$ with respect to a complex quasi-L\'{e}vy type measure can be defined in the obvious way as
\begin{align*}
\int\limits_{\bR}f(x)\nu(dx):=\lim\limits_{r \downarrow 0} \int\limits_{|x|\ge r} f(x)\nu_{|\mathcal{B}_r}(dx)=\int\limits_{\bR}f(x)(\Real \nu)(dx)+i\int\limits_{\bR}f(x)(\I \nu)(dx),
\end{align*}
which shows in particular that $x \mapsto e^{izx}-1-izx\one_{[-1,1]}(x)$ is integrable for every $z\in\bR$.\\
We now come to the aforementioned result:
\begin{theorem}\label{theorem1}
Let $\mu$ be a distribution on $\bR$ whose characteristic function allows a representation of the form
\begin{align}\label{clkr}
\hat{\mu}(z)=\exp\left( i\gamma z- \frac12 az^2+\int\limits_{\bR}(e^{ixz}-1-izx\one_{[-1,1]}(x))\nu(dx) \right)
\end{align}
where $\gamma\in \bC$, $a\in\bC$ and $\nu$ is a complex quasi-L\'{e}vy-type measure. Then $a,\gamma \in\bR$, $a\ge 0$ and $\I \nu=0$, i.e. $\nu$ is a quasi-L\'{e}vy measure and $\mu$ is quasi-infinitely divisible.
\end{theorem}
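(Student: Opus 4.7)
\bigskip
\noindent\textbf{Proof proposal.} The plan is to split the complex triplet $(a,\gamma,\nu)$ into real and imaginary parts and exploit the Hermitian symmetry $\hat\mu(-z)=\overline{\hat\mu(z)}$ of the characteristic function of any real-valued probability measure. Writing $a=a_1+ia_2$, $\gamma=\gamma_1+i\gamma_2$, and $\nu=\nu_1+i\nu_2$ with $\nu_1,\nu_2$ real quasi-L\'{e}vy type measures, let $f(z)$ denote the bracketed expression in (\ref{clkr}), so that $\hat\mu(z)=e^{f(z)}$. Since $\hat\mu$ is nowhere zero and $f$ is continuous with $f(0)=0$, the equality $e^{f(-z)}=e^{\overline{f(z)}}$ lifts (via the standard integer-valued continuous selection argument) to $f(-z)=\overline{f(z)}$; equivalently, $\Real f$ is even and $\I f$ is odd in $z$.

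Expanding $e^{ixz}-1-ixz\one_{[-1,1]}(x)$ into its cosine and sine parts and separating the contributions of $\nu_1$ and $\nu_2$, the oddness of $\I f$ forces its even part to vanish, and the evenness of $\Real f$ forces its odd part to vanish. These two scalar identities combine (as ``real plus $i$ times imaginary'' of a single complex identity) into
\begin{align*}
-\tfrac{a_2}{2}z^2 + i\gamma_2 z + \int\limits_{\bR}\bigl(e^{ixz}-1-ixz\one_{[-1,1]}(x)\bigr)\nu_2(dx) = 0 \qquad \text{for every }z\in\bR.
\end{align*}
Exponentiating, the L\'{e}vy-Khintchine expression with real triplet $(a_2,\gamma_2,\nu_2)$ is identically $1=\hat{\delta_{0}}(z)$. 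Since $(0,0,0)$ also produces the characteristic function of $\delta_0$, uniqueness of the characteristic triplet of a quasi-infinitely divisible distribution, cited in Definition \ref{def1}, forces $a_2=\gamma_2=0$ and $\nu_2\equiv 0$. Hence $a,\gamma\in\bR$ and $\nu$ is a (real) quasi-L\'{e}vy type measure.

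It remains to show $a\ge 0$. From $|\hat\mu(z)|\le 1$ one has $\Real f(z)\le 0$, which after the above reductions reads $-\tfrac{a}{2}z^2+\int_\bR(\cos(xz)-1)\nu(dx)\le 0$. Dividing by $z^2$ and letting $|z|\to\infty$ the integral term should tend to $0$, leaving $-a/2\le 0$. The only non-algebraic step in the whole argument is precisely this asymptotic $z^{-2}\int(\cos(xz)-1)\nu(dx)\to 0$, which I would obtain by decomposing $\nu=\nu^{+}-\nu^{-}$ into L\'{e}vy measures and applying dominated convergence with the bound $|\cos(xz)-1|/z^2\le 2(x^2\wedge 1)$ (valid for $|z|\ge 1$), the right-hand side being integrable against every L\'{e}vy measure. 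A completely analogous DCT estimate is needed to justify manipulating $\int(\sin(xz)-xz\one_{[-1,1]}(x))\nu_2(dx)$ in the symmetry step, and I expect this routine but slightly finicky integrability bookkeeping to be the only real, if minor, obstacle.
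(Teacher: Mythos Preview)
Your proposal is correct and follows essentially the same route as the paper: both exploit the Hermitian symmetry of $\hat\mu$ to show that the imaginary triplet $(a_2,\gamma_2,\nu_2)$ furnishes a L\'evy--Khintchine representation of $\delta_0$, and then invoke uniqueness of the characteristic triplet to conclude $a_2=\gamma_2=0$, $\nu_2=0$. The paper packages the symmetry step slightly differently---it works with $|\hat\mu(z)|^2=\hat\mu(z)\hat\mu(-z)$ and $\hat\mu(z)/\hat\mu(-z)$ separately and appeals to uniqueness of the distinguished logarithm, whereas you lift $e^{f(-z)}=e^{\overline{f(z)}}$ directly to $f(-z)=\overline{f(z)}$---but these are the same argument in different clothing. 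The one genuine difference is the conclusion $a\ge 0$: the paper simply cites \cite{Lindner}, Lemma~2.7, while you supply the self-contained dominated-convergence argument; your bound $|\cos(xz)-1|/z^2\le 2(x^2\wedge 1)$ for $|z|\ge 1$ is correct and the DCT goes through as you describe, so this is a modest bonus rather than an obstacle.
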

\begin{proof}
We see that
\begin{align*}
|\hat{\mu}(z)|^2=\hat{\mu}(z)\hat{\mu}(-z)&=\exp\left( -az^2+\int\limits_{\bR}(e^{ixz}+e^{-ixz}-2)\nu(dx) \right)\\
&=\exp\left( -az^2+\int\limits_{\bR}2(\cos(xz)-1)\nu(dx)\right)
\end{align*}
and
\begin{align*}
\frac{\hat{\mu}(z)}{\hat{\mu}(-z)}=\exp\left( i2\gamma z+i\int\limits_{\bR} 2(\sin(xz)-zx\one_{[-1,1]}(x))\nu(dx) \right).
\end{align*}
As $|\hat{\mu}(z)|>0$ for every $z\in\bR$, we see that 
\begin{align*}
|\hat{\mu}(z)|^2=\exp\left( \log (|\hat{\mu}(z)|^2) \right),
\end{align*}
where $\log$ is the natural logarithm. As the distinguished logarithm is uniquely determined (see [\ref{Sato}], Lemma 7.6) and
\begin{align*}
 g(z)=-az^2+\int\limits_{\bR}2(\cos(xz)-1)\nu(dx)
\end{align*}
is continuous and $g(0)=0$, we conclude that
\begin{align*}
-\frac{1}{2}az^2+\int\limits_{\bR} (\cos(xz)-1)\nu(dx) \in\bR
\end{align*}
for every $z\in\bR$ and especially we obtain
\begin{align*}
-\frac{1}{2}\I az^2 + \int\limits_{\bR}(\cos(xz)-1)\nu^{\I}(dx)=0.
\end{align*}
Furthermore, as $|\hat{\mu}(z)|=|\overline{\hat{\mu}(z)}|=|\hat{\mu}(-z)|$, we conclude that
\begin{align*}
\gamma z+\int\limits_{\bR} (\sin(xz)-zx\one_{[-1,1]}(x))\nu(dx) \in\bR
\end{align*}
for every $z\in\bR$. It follows that
\begin{align*}
0=\I \gamma z+\int\limits_{\bR} (\sin(xz)-zx\one_{[-1,1]}(x))\nu^{\I}(dx)
\end{align*}
for every $z\in\bR$. At last, with the quasi-L\'{e}vy measure $\nu^{\I}$ we obtain a L\'{e}vy-Khintchine formula for $\delta_0(dx)$, because
\begin{align*} 
0&=i \I\gamma z-\frac{1}{2}\I a z^2+\int\limits_{\bR}(\cos(xz)-1)\nu^{\I}(dx)+i\int\limits_{\bR}(\sin(xz)-xz \one_{[-1,1]})\nu^{\I}(dx)\\
&=i \I\gamma z-\frac{1}{2}\I a z^2+\int\limits_{\bR}(e^{ixz}-1-ixz\one_{[-1,1]}(x))\nu^{\I}(dx).
\end{align*}
Hence
\begin{align*}
\hat{\delta}_0(z)=1=\exp(0)=\exp\left(i \I\gamma z-\frac{1}{2}\I a z^2+\int\limits_{\bR}(e^{ixz}-1-ixz\one_{[-1,1]}(x))\nu^{\I}(dx) \right).
\end{align*}
The uniqueness of the characteristic triplet of quasi-infinitely divisible distributions then shows that $\I \gamma=\I a=0$ and that $\nu^{\I}$ is the null-measure. Hence $\mu$ is quasi-infinitely divisible. That $a \ge 0$ follows from Lemma 2.7 in  [\ref{Lindner}].
\end{proof}
\begin{remark}
Theorem \ref{theorem1} is very helpful to prove quasi-infinite divisibility of certain distributions, as it is often easier to establish a L\'{e}vy-Khintchine type representation with a complex quasi-L\'{e}vy-type measure rather then directly with a quasi-L\'{e}vy-type measure. An example is the proof of Theorem 8.1 in [\ref{Lindner}]. There it is shown, using the L\'{e}vy-Wiener-Theorem, that a distribution $\mu$ on $\bZ$ with $\hat{\mu}(z)\neq 0$ for all $z$ allows a L\'{e}vy-Khintchine type representation with a complex L\'{e}vy-type measure $\nu=\sum_{k\in\bZ\setminus \{0\}}b_k\delta_k$ for some summable sequence $b_k\in\bC$. There, it is shown using an involved approximation argument that the $b_k$ are actually real and hence $\mu$ quasi-infinitely divisible. This step can now be simplified considerably by using Theorem \ref{theorem1}. 
\end{remark}
\section{Some new quasi-infinitely divisible distributions}
\subsection{Absolutely continuous distributions plus a pointmass}
In this section we will look at distributions of the form 
\begin{align}\label{1}
\mu(dx)=p\delta_{x_0}(dx)+(1-p)f(x)\lambda(dx),
\end{align}
where $\lambda$ is the Lebesgue measure, $f$ a Lebesgue density, $x_0\in\bR$ and $p\in (0,1)$. We first specialize to $x_0=0$. The characteristic function is then given by
\begin{align*}
\hat{\mu}(z)=p+(1-p) \hat{f}(z),
\end{align*}
where $\hat{f}(z)=\int_\bR e^{ixz} f(x)\,\lambda(dx)$.
We want to use a similar argument as in $[\ref{Lindner}]$ in order to show every distribution $\mu$ of the form $(\ref{1})$ is quasi-infinitely divisible if and only if $\hat{\mu}(z)\neq0$ for every $z\in\bR$. We denote by $\overline{\bR}$ the compactification of $\bR$, i.e. $\overline{\bR}=\bR\cup \{-\infty,\infty\}$. A characteristic function $\hat{\mu}$ of the distribution $\mu$ of the form (\ref{1}) is then nonzero on the set $\overline{\bR}$ if and only if it is nonzero on $\bR$. This follows directly from the Riemann-Lebesgue Lemma.\\
At first let us fix some notations.
\begin{definition} We denote by $W(\bR,\bC)$ the space
\begin{align*}
W(\bR,\bC):=\{F:\bR\to\bC\,| \exists p\in\bC, f\in L^1(\bR,\bC) \textrm{ such that } F(z)=p+\int\limits_{\bR} f(x)e^{ixz}\,\lambda(dx)\textrm{ for all }z\in\bR \}.
\end{align*}
With the norm $||F||=|p|+||f||_{L^1(\bR,\bC)}$ for $F(z)=p+\int\limits_{\bR} f(x)e^{ixz}\,\lambda(dx)$ the normed space $(W(\bR,\bC),||\cdot||)$ becomes a Banach algebra (see [\ref{Liflyand}, Theorem 4.1]).
\end{definition}
\begin{definition}
Let $W(\bR,\bC)\ni F(x)=p+\int_{\bR}e^{ixz}f(z)\lambda(dz)\neq 0$ for every $x\in \bR$ and $p\in\bC\setminus\{0\}$. Then we can interpret $F$ as a closed curve in $\bC$. By the property of the distinguished logarithm, there exists a continuous function $g:\bR \to \bR$ such that
\begin{align*}
\frac{F(x)}{|F(x)|}=\exp\left( ig(x) \right)\quad\textrm{ for all }x\in\bR.
\end{align*}
Then the \emph{index} $\ind(F)$ of $F$ is defined as
\begin{align*}
\ind(F):=\frac{1}{2\pi }(\lim\limits_{z\to +\infty}g(z)-\lim\limits_{z\to -\infty}g(z))=:\frac{1}{2\pi}(g(\infty)-g(-\infty)).
\end{align*}
\end{definition}
\begin{remark}
By the Riemann-Lebesgue, it is relatively easy to see that $\ind(F)$ is well-defined and $\ind(F)\in\bZ$. Also, for $F(x)=p+\int_{\bR}e^{ixz}f(x)\,\lambda(dx)\in W(\bR,\bC)$ we have $F(x)\neq 0$ for all $x\in \overline{\bR}$ if and only if $p\neq 0$ and $F(x)\neq 0$ for all $x\in\bR$.
\end{remark}
The following is now the key ingredient for identifying further quasi-infinitely divisible distributions:
\begin{theorem} [\ref{Krein}, Theorem L, p.175]\label{2}
Let $F(x)=p+\int\limits_{\bR}e^{ixz}f(z)\lambda(dz)$ for a function $f\in L^1(\bR,\bC)$ and some $p\in \bC\setminus\{0\}$. Furthermore, assume that $F(x)\neq 0$ for every $x\in\bR$ and {\normalfont ind}$(F)=0$.
Then there exist some $q\in\bC$ and a function $g\in L^1(\bR,\bC)$ such that 
\begin{align*}
F(x)=\exp\left(q+\int\limits_{\bR} e^{ixz}g(z)\,\lambda(dz) \right)\quad\textrm{for all }x\in\bR.
\end{align*}
\end{theorem}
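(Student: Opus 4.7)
The statement is a Wiener--L\'evy type theorem for the Banach algebra $W(\bR,\bC)$, so the plan is to prove it by Gelfand theory together with the holomorphic functional calculus for commutative Banach algebras. First I would identify the maximal ideal space $M$ of $W(\bR,\bC)$. Each $x\in\bR$ gives a character $\chi_x\colon F\mapsto F(x)$, and the map $\chi_\infty\colon p+\hat f\mapsto p$ is also a character, well defined thanks to the Riemann--Lebesgue lemma. A standard argument, decomposing an arbitrary character as evaluation of $p$ plus a character on the closed ideal $\{\hat f:f\in L^1(\bR,\bC)\}$ (which corresponds to $L^1(\bR)$ and whose spectrum is $\bR$), shows these exhaust $M$, and that in the Gelfand topology $M$ is homeomorphic to $\dot\bR=\bR\cup\{\infty\}$, a topological circle. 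The algebra is also semisimple: if $p+\hat f\equiv 0$ then letting $|x|\to\infty$ gives $p=0$, and Fourier uniqueness then forces $f=0$, so the Gelfand transform is injective.

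Next, the hypothesis $F(x)\neq0$ for $x\in\bR$ together with $F(\infty)=p\neq0$ says precisely that the Gelfand transform $\hat F$ is nowhere zero on $M$, so $F$ is invertible in $W(\bR,\bC)$ by the general Banach-algebra criterion. This is already the ``Wiener'' half of the statement ($1/F\in W(\bR,\bC)$); the ``L\'evy'' half, existence of a logarithm in the algebra, is what requires the extra hypothesis $\ind(F)=0$.

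For the logarithm I would invoke holomorphic functional calculus: for any function $\Phi$ holomorphic on an open set $\Omega\subset\bC$ containing the compact set $\hat F(M)$ there is $G\in W(\bR,\bC)$ with Gelfand transform $\Phi\circ\hat F$. To take $\Phi=\log$ one needs a single-valued branch of the logarithm on a simply connected open $\Omega\subset\bC\setminus\{0\}$ containing the closed curve $\hat F(M)$. This is exactly where $\ind(F)=0$ is used: that curve then has winding number $0$ about the origin, so a suitable tubular thickening of it avoids $0$ and is simply connected. The functional calculus then produces $G\in W(\bR,\bC)$, i.e.\ $G(x)=q+\int_\bR e^{ixz}g(z)\,\lambda(dz)$ for some $q\in\bC$ and $g\in L^1(\bR,\bC)$, with $\hat G=\log\hat F$; applying $\exp$ and invoking semisimplicity yields $F=e^G$ pointwise on $\bR$, which is the asserted representation.

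The main obstacle is the functional-calculus step itself, since it is not just a formal invocation: one has to construct $G$ concretely in $W(\bR,\bC)$ from $\Phi$ and $F$. A more hands-on route, closer to Krein's original argument, would bypass abstract holomorphic calculus and build $\log F$ directly by a finite partition of unity on $\dot\bR$. Near any $x_0$, Wiener's theorem applied to $F/F(x_0)$ produces a local logarithm in the algebra (using that a principal branch of $\log$ is holomorphic in a disc around $1$ and composes with elements of the local Wiener algebra); the hypothesis $\ind(F)=0$ is exactly what guarantees that the locally defined branches can be glued into a globally single-valued $G\in W(\bR,\bC)$. Either way, the non-trivial point is the passage from the existence of a continuous logarithm of $\hat F$ on $M$ to its membership in $W(\bR,\bC)$, which is the content that distinguishes a Wiener--L\'evy theorem from a pointwise statement.
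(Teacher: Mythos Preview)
First, note that the paper does not prove this statement; it is quoted from Krein [\ref{Krein}, Theorem~L] and used as a black box, so there is no argument in the paper to compare against. Your Gelfand-theoretic framework and the identification of the maximal ideal space of $W(\bR,\bC)$ with $\dot\bR\cong S^1$ are correct, but the step ``a suitable tubular thickening of [the curve $\hat F(M)$] avoids $0$ and is simply connected'' is false, and this is a genuine gap. The hypothesis $\ind(F)=0$ constrains the \emph{parametrized} loop $\hat F\colon S^1\to\bC\setminus\{0\}$, not its image: take $F(x)=e^{i\theta(x)}$ with $\theta\in C_c^\infty(\bR)$ rising from $0$ to $2\pi$ and back to $0$; then $F-1\in C_c^\infty(\bR)$, so $F\in W(\bR,\bC)$ with $\ind(F)=0$, yet the spectrum $\hat F(M)$ is the full unit circle, and no open neighbourhood of $S^1$ in $\bC\setminus\{0\}$ carries a single-valued holomorphic logarithm. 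The one-variable holomorphic functional calculus therefore cannot be invoked to produce $\log F$ in this situation.

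What actually closes the gap is the Arens--Royden theorem ($A^{-1}/\exp(A)\cong\check H^1(M;\bZ)$ for a commutative unital Banach algebra $A$), which here gives $\bZ$ with the class of $F$ equal to $\ind(F)$. Your hands-on alternative is essentially a sketch of this special case and points in the right direction, but it is not complete either: ``local logarithms'' near a point of $\dot\bR$ are not automatically elements of the global algebra $W(\bR,\bC)$, and the gluing via a partition of unity \emph{inside the algebra} (or the Wiener--Hopf factorization that Krein himself employs) is precisely the non-trivial content that still has to be supplied. As written, neither of your two routes is complete, and the first contains an actual error rather than just a missing detail.
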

With the aid of Theorem \ref{2} we can now give a L\'{e}vy-Khintchine type representation for arbitrary $F\in W(\bR,\bC)$ that do not vanish on $\overline{\bR}$ and are such that $F(0)=1$.
\begin{theorem}\label{Theorem2}\mbox{}\\
Let $F\in W(\bR,\bC)$ with $F(z)\neq 0$ for every $z\in\overline{\bR}$ and $F(0)=1$. Denote by $m$ the index of $F$. Then there is some function $g\in L^1(\bR,\bC)$ such that
\begin{align}\label{special}
F(z)=\exp\left( \int\limits_{\bR}(e^{ixz}-1)\left(g(x)+\frac{me^{-|x|}}{|x|}(\one_{(0,\infty)}(x)-\one_{(-\infty,0)}(x))\right)\lambda(dx) \right)
\end{align}
for all $z\in\bR.$
\end{theorem}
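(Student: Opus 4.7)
The plan is to reduce to Krein's Theorem~\ref{2} by multiplying $F$ with an explicit auxiliary function whose index cancels that of $F$, and then rearranging the output into the L\'{e}vy--Khintchine type form. First I would identify the meaning of the correction term appearing in the statement. Symmetrizing the integrand and invoking the classical identity $\int_0^\infty \frac{e^{-x}\sin(xz)}{x}\,\lambda(dx)=\arctan(z)$ gives
\begin{align*}
h(z):=\int_\bR (e^{ixz}-1)\frac{me^{-|x|}}{|x|}\bigl(\one_{(0,\infty)}(x)-\one_{(-\infty,0)}(x)\bigr)\,\lambda(dx) = 2im\arctan(z),
\end{align*}
so that $e^{h(z)}=\bigl(\frac{1+iz}{1-iz}\bigr)^m$ has modulus one, and its continuous argument $2m\arctan(z)$ runs from $-m\pi$ to $m\pi$, giving $\ind(e^{h})=m$.

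Next I would verify that $e^{-h}\in W(\bR,\bC)$. The decomposition $\frac{1-iz}{1+iz}=-1+\frac{2}{1+iz}$, combined with the fact that $\frac{2}{1+iz}$ is the Fourier transform of the integrable function $x\mapsto 2e^{x}\one_{(-\infty,0)}(x)$, places $\frac{1-iz}{1+iz}$ in $W(\bR,\bC)$ with constant part $-1$; iterating in the Banach algebra $W(\bR,\bC)$ (and using the analogous identity $\frac{1+iz}{1-iz}=-1+\frac{2}{1-iz}$ when $m<0$) yields $e^{-h}\in W(\bR,\bC)$. Consequently $\tilde F:=F\cdot e^{-h}\in W(\bR,\bC)$ is nowhere zero on $\overline\bR$ (both factors are nonzero; at $\pm\infty$ the constant part equals $F(\infty)\cdot(-1)^m\neq 0$), and by the multiplicativity of the winding number $\ind\tilde F=\ind F+\ind(e^{-h})=m-m=0$.

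Now Theorem~\ref{2} applies to $\tilde F$ and produces $q\in\bC$ and $\tilde g\in L^1(\bR,\bC)$ such that
\begin{align*}
\tilde F(z)=\exp\Bigl(q+\int_\bR e^{ixz}\tilde g(x)\,\lambda(dx)\Bigr).
\end{align*}
Writing $e^{ixz}=1+(e^{ixz}-1)$ and using $\tilde F(0)=F(0)e^{-h(0)}=1$ forces $q+\int_\bR \tilde g(x)\,\lambda(dx)\in 2\pi i\bZ$, so the constant is absorbed by the exponential and
\begin{align*}
\tilde F(z)=\exp\Bigl(\int_\bR(e^{ixz}-1)\tilde g(x)\,\lambda(dx)\Bigr).
\end{align*}
Multiplying through by $e^{h(z)}$ and setting $g:=\tilde g$ yields the representation~\eqref{special}. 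The main obstacle is the middle step, where one must exhibit an explicit Wiener-algebra representative of $\bigl(\frac{1-iz}{1+iz}\bigr)^m$ (taking care about the sign of $m$); once this concrete realization is in hand, the remaining arguments are essentially bookkeeping around Krein's theorem, the Riemann--Lebesgue lemma, and the definition of the index.
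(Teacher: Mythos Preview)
Your proof is correct and is essentially the same as the paper's: both multiply $F$ by an explicit index-$(-m)$ element of $W(\bR,\bC)$ built from $\frac{1\pm iz}{1\mp iz}$, apply Krein's theorem to the resulting index-zero function, absorb the constant via $F(0)=1$, and then read off the L\'evy--Khintchine form of the correction factor. The only cosmetic differences are that you identify the correction term upfront via the $\arctan$ integral and treat all $m\in\bZ$ at once, whereas the paper starts from $Q(z)=\bigl(\frac{z-i}{z+i}\bigr)^m=(-1)^m e^{h(z)}$, cites Sato for its L\'evy representation, and handles $m<0$ by the reflection $G(z)=F(-z)$.
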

\begin{proof}
{\it a)} Let us first assume that $m=\ind(F)=0$. By Theorem \ref{2} there exist a constant $c\in\bC$ and a function $g\in L^1(\bR,\bC)$ such that
\begin{align*}
F(z)=\exp\left( c+\int\limits_{\bR}e^{izx}g(x)\,\lambda(dx) \right).
\end{align*}
As $F(0)=1$, we conclude that
\begin{align*}
c+\int\limits_{\bR}g(x)\,\lambda(dx)\in 2\pi i\bZ.
\end{align*}
Hence, we can write 
\begin{align*}
F(z)&=\exp\left(c+\int\limits_{\bR}g(x)\,\lambda(dx)\right)\exp\left(\int\limits_{\bR}(e^{ixz}-1)g(x)\,\lambda(dx)\right)\\
&=\exp\left(\int\limits_{\bR}(e^{ixz}-1)g(x)\,\lambda(dx)\right).
\end{align*}
{\it b)} Now assume that $0\neq m=\ind(F)\in\bN$. Define the function $Q:\bR\to \bC$ by
\begin{align*}
Q(z)=\frac{(z-i)^m}{(z+i)^m},\quad z\in\bR.
\end{align*}
Since
\begin{align*}
\frac{z+i}{z-i}=1+\frac{2i}{z-i}=1-\frac{2}{1+iz}=1-2\int\limits_{-\infty}^0 e^{x}e^{ixz}\,\lambda(dx),\quad z\in\bR,
\end{align*}
it follows that $z\mapsto \frac{z+i}{z-i}\in W(\bR,\bC)$ and hence, since $W(\bR,\bC)$ is a Banach algebra, that also $Q^{-1}\in W(\bR,\bC)$ and that $Q^{-1}F\in W(\bR,\bC)$. Then obviously $Q^{-1}(z)F(z)\neq 0$ for all $z\in\overline{\bR}$, and by the proof of Theorem 2.2, p. 180 in Krein [\ref{Krein}], it follows that $\ind(Q^{-1}F)=0$ and hence $\ind(Q(0)Q^{-1}F)=0$. Hence, by part {\it a)} already proved, there is some $g\in L^1(\bR,\bC)$ such that
\begin{align}\label{12}
Q(0)Q^{-1}(z)F(z)=\exp\left( \int\limits_{\bR} (e^{ixz}-1)g(x)\,\lambda(dx) \right),\quad \textrm{ for all }z\in\bR.
\end{align}
But
\begin{align*}
\frac{1}{(z+i)^m}=(-i)^m \frac{1}{(1-iz)^m}=(-i)^m \exp\left( \int_{0}^\infty (e^{ixz}-1)\frac{me^{-x}}{x}\lambda(dx) \right)\quad \textrm{for all }z\in\bR
\end{align*}
(see [\ref{Sato}], Example 8.10), hence
\begin{align*}
(z-i)^m =\left((-1)^m \frac{1}{(-z+i)^m}\right)^{-1}=(-1)^mi^m\exp\left(-\int\limits_{0}^\infty (e^{-izx}-1)m \frac{e^{-x}}{x}\,\lambda(dx) \right)
\end{align*}
so that
\begin{align*}
Q(z)=\left(\frac{z-i}{z+i}\right)^m=(-1)^m \exp\left(\int\limits_{\bR} (e^{ixz}-1)\left( \frac{me^{-x}}{x}\one_{(0,\infty)}(x)-\frac{me^{-|x|}}{|x|}\one_{(-\infty,0)}(x) \right) \lambda(dx)\right).
\end{align*}
Observe that $(-1)^m=Q(0)$. Together with (\ref{12}) this gives the desired result when $m\in\bN$.\\
{\it c)}  Now assume that $m=\ind(F)\in -\bN$. Then $x\mapsto F(-x)=:G(x)\in W(\bR,\bC)$ with $\ind (G)=-m$. The result then follows from {\it b)}.
\end{proof}

Similarly as in Lindner et al. [\ref{Lindner}], who showed that a distribution on $\bZ$ is quasi-infinitely divisible if and only if its characteristic function has no zeroes, we can now prove that a distribution whose singular part consists of a non-trivial atom is quasi-infinitely divsible if and only if its characteristic function has no zeroes:
\begin{theorem}\label{cor1}
Let $\mu$ be a probability distribution of the form $(\ref{1})$. Then $\mu$ is quasi-infinitely divisible if and only if $\hat{\mu}(z)\neq 0$ for every $z\in\bR$. In that case, the quasi-L\'{e}vy measure $\nu$ of $\mu$ is given by
\begin{align*}
\left(g(x)+\frac{me^{-|x|}}{|x|}(\one_{(0,\infty)}(x)-\one_{(-\infty,0)}(x))\right)\lambda(dx),
\end{align*}
where $g\in L^1(\bR,\bR)$ and $m$ is the index of $\widehat{\mu\ast \delta_{-x_0}}$. Furthermore, $\int_{-1}^1 |x| |\nu| (dx)<\infty$, $\mu$ has drift $x_0$ and Gaussian variance $0$. Finally, $|\nu|$ is finite if and only if $m=0$, and if $m\neq 0$, then $\nu^{-}(\bR)=\nu^{+}(\bR)=\infty$. 
\end{theorem}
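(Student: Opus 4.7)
The ``only if'' direction is immediate, since any quasi-infinitely divisible $\mu$ has $\hat\mu(z)=\exp(\cdots)$ nowhere zero. My plan for the converse is to reduce to the centered case $x_0=0$, apply the Krein representation of Theorem \ref{Theorem2}, and then use Theorem \ref{theorem1} to force the resulting complex density to be real.

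I would set $\tilde\mu := \mu\ast\delta_{-x_0} = p\delta_0 + (1-p)\tilde f\lambda$ with $\tilde f(\cdot)=f(\cdot+x_0)$, so that $\hat{\tilde\mu}(z) = p + (1-p)\hat{\tilde f}(z)\in W(\bR,\bC)$. Since $|\hat{\tilde\mu}|=|\hat\mu|$ never vanishes on $\bR$ and $\hat{\tilde\mu}(z)\to p\neq0$ as $|z|\to\infty$ by Riemann-Lebesgue, $\hat{\tilde\mu}$ is non-vanishing on $\overline{\bR}$, and $\hat{\tilde\mu}(0)=1$. Theorem \ref{Theorem2} then produces $g\in L^1(\bR,\bC)$ with
\begin{align*}
\hat{\tilde\mu}(z) = \exp\left(\int_\bR(e^{ixz}-1)\,\tilde\nu(dx)\right),\q \tilde\nu(dx) = \left(g(x) + \tfrac{m e^{-|x|}}{|x|}(\one_{(0,\infty)}(x)-\one_{(-\infty,0)}(x))\right)\lambda(dx),
\end{align*}
where $m = \ind(\hat{\tilde\mu})$.

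Next I would verify that $\tilde\nu$ is a complex quasi-L\'evy-type measure satisfying $\int_{[-1,1]}|x|\,|\tilde\nu|(dx)<\infty$ (its density near zero is dominated by $|g(x)|+|m|/|x|$, and $|x|\cdot|x|^{-1}\equiv 1$ is integrable on $[-1,1]$). Using this integrability I split $e^{ixz}-1 = (e^{ixz}-1-ixz\one_{[-1,1]}(x))+ixz\one_{[-1,1]}(x)$ to rewrite the exponent in the standard L\'evy-Khintchine form with Gaussian variance $0$, drift-constant $\tilde\gamma := \int_{[-1,1]}x\,\tilde\nu(dx)\in\bC$, and complex quasi-L\'evy-type measure $\tilde\nu$. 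Theorem \ref{theorem1} then forces $\I\tilde\gamma=0$ and $\I\tilde\nu=0$; in particular $g$ is real-valued $\lambda$-a.e., so $\tilde\mu$ is quasi-infinitely divisible with characteristic triplet $(0,\tilde\gamma,\tilde\nu)$ and drift $\tilde\gamma - \int_{[-1,1]}x\,\tilde\nu(dx)=0$. Convolving with $\delta_{x_0}$ shifts the drift to $x_0$ without altering the Gaussian variance or quasi-L\'evy measure, so $\mu$ has the advertised characteristic triplet.

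For the dichotomy on $|\nu|(\bR)$: when $m=0$ the measure $\nu = g\lambda$ is finite with $|\nu|(\bR)=\|g\|_{L^1}$. When $m\neq 0$, the density $h(x):=g(x)+me^{-|x|}|x|^{-1}\sgn(x)$ satisfies $\int_r^{\infty}h(x)\,dx\to\pm\infty$ as $r\downarrow 0$ (sign according to that of $m$), since $\int_0^1 e^{-x}/x\,dx=\infty$ while $g\in L^1$; hence $\nu^+((r,\infty))\geq \int_r^\infty h^+\geq \int_r^\infty h\to\infty$, and a symmetric computation on $(-\infty,-r)$ gives $\nu^-(\bR)=\infty$ as well. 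The principal technical hurdle is the passage from the Krein-type representation of Theorem \ref{Theorem2} to a genuine L\'evy-Khintchine representation so that Theorem \ref{theorem1} becomes applicable; once the integrability $\int_{-1}^1|x|\,|\tilde\nu|(dx)<\infty$ has been secured, the remaining steps are routine manipulations.
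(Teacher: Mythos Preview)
Your proposal is correct and follows essentially the same approach as the paper: reduce to $x_0=0$, apply Theorem~\ref{Theorem2} to obtain the complex L\'evy-Khintchine type representation, and then invoke Theorem~\ref{theorem1} to force $g$ to be real. In fact you supply more detail than the paper does on the integrability check $\int_{-1}^1|x|\,|\tilde\nu|(dx)<\infty$ and on the $\nu^\pm(\bR)=\infty$ dichotomy, both of which the paper dismisses with ``the remaining assertions are clear.''
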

\begin{proof}
That $\hat{\mu}(z)\neq 0$ for all $z\in\bR$ is obviously necessary for $\mu$ to be quasi-infinitely divisible. To see that it is sufficient, it is sufficient to assume that $x_0=0$, since $\mu$ is quasi-infinitely divsible if and only if $\tilde{\mu}:=\mu\ast \delta_{-x_0}$ is quasi-infinitely divsible. By Theorem \ref{Theorem2} we see that $\tilde{\mu}$ has a L\'{e}vy-Khintchine representation given by
\begin{align*}
\hat{\tilde{\mu}}(z)=\exp\left( \int\limits_{\bR}(e^{ixz}-1)(g(x)+\frac{me^{-|x|}}{|x|}(\one_{(0,\infty)}(x)-\one_{(-\infty,0)}(x)))\lambda(dx) \right)
\end{align*} 
for all $z\in \bR$ with some $g\in L^1(\bR,\bC)$. Then Theorem \ref{theorem1} implies that $g\in L^1(\bR,\bR)$ and $\mu$ is quasi-infinitely divisible. The remaining assertions are clear.
\end{proof}
\begin{example}\label{ex1}
It is worth noting that distributions of the above form with non-zero index can indeed occur. For example, consider the distribution
\begin{align*}
\mu(dx)=\frac{1}{1000}\delta_0(dx) + \frac{999}{1000}\rho(dx),
\end{align*}
where $\rho=N(1,1)$. Then $\hat{\mu}(z)=\frac{1}{1000}+\frac{999}{1000}e^{iz}e^{-z^2/2}\neq 0$ for all $z\in\bR$. Observing that $\hat{\mu}(\pi)<0$, $\Real\hat{\mu}(z)>0$ for all $z\ge 2\pi$, $\I \hat{\mu}(z)=e^{-z^2/2}\sin(z)$ it is easy to see that $\hat{\mu}$ has index $2$. Hence $\mu$ is quasi-infinitely divisible with quasi-L\'{e}vy measure $\nu$ satisfying $\nu^{-}(\bR)=\nu^{+}(\bR)=|\nu|(\bR)=\infty$ by Theorem \ref{cor1}.
\end{example}
\begin{remark} In [\ref{Lindner}, Open Question 7.2] it was asked that if for a quasi-infinitely divisible distribution $\mu$ with triplet $(a,\gamma,\nu)$ continuity of $\mu$ is equivalent to $a\neq 0$ or $|\nu|(\bR)=\infty$. Example \ref{ex1} answers this question in the negative. Indeed, the distribution $\mu$ there is not continuous, but the total variation of the quasi-L\'{e}vy measure is infinite.
\end{remark}
\begin{remark}
It is known that distributions of the form $(\ref{1})$ with $p\in [0,1]$, $x_0=0$ and $f$ vanishing on $(-\infty,0)$ are infinitely divisible provided $\log f$ is convex on $(0,\infty)$ or $f$ is completely montone on $(0,\infty)$, see [\ref{Sato}, Theorem 51.4, 51.6]. Theorem \ref{cor1} shows that quasi-infinite divsibility can be achieved for a much wider class of distributions $\mu$ of this type, provided $p>0$ and $\hat{\mu}$ has no zeroes, but with no other assumptions on $f$.
\end{remark}
As in [\ref{Lindner}, Corollary 8.3], one can now see that factors of a quasi-infinitely divisible distribution of the form (\ref{1}) are also quasi-infinitely divisible. 
\begin{corollary}\label{fac1}
Let $\mu$ be a distribution of the form $(\ref{1})$ and $\mu_1,\mu_2$ probability distributions such that $\mu=\mu_1\ast \mu_2$. Then $\mu$ is quasi-infinitely divisible if and only if $\mu_1$ and $\mu_2$ are quasi-infinitely divisible.
\end{corollary}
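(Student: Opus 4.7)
The forward direction would follow by adding characteristic triplets: if $\mu_1$ and $\mu_2$ are quasi-infinitely divisible, multiplying their L\'evy--Khintchine representations gives one for $\hat{\mu} = \hat{\mu}_1 \hat{\mu}_2$. The content of the corollary lies in the converse, and my plan is to reduce it to Theorem \ref{cor1} via a structural description of the factors.

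Assume $\mu$ of the form (\ref{1}) is quasi-infinitely divisible with $\mu = \mu_1 \ast \mu_2$. By Theorem \ref{cor1} the characteristic function $\hat{\mu}$ has no real zeroes, so neither does $\hat{\mu}_j$ for $j = 1, 2$. It would then suffice to show that each $\mu_j$ is itself of the form (\ref{1}), possibly degenerate with $p_j = 1$ (a pure point mass), because Theorem \ref{cor1} would immediately deliver quasi-infinite divisibility of each factor in either case.

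To establish this structural claim I would analyze the Lebesgue decomposition $\mu_j = \mu_{j,d} + \mu_{j,sc} + \mu_{j,ac}$ and expand $\mu_1 \ast \mu_2$ into its nine cross-convolutions. Three observations drive the argument: the discrete part of $\mu_1 \ast \mu_2$ equals $\mu_{1,d} \ast \mu_{2,d}$, since atoms of a convolution arise only from pairs of atoms and positivity of atom masses precludes cancellation; any convolution involving an absolutely continuous factor is absolutely continuous; and the convolution of a point mass with a singular continuous measure is again singular continuous. Together these identify the singular continuous part of $\mu_1 \ast \mu_2$ as the sum of nonnegative measures $\mu_{1,d} \ast \mu_{2,sc} + \mu_{1,sc} \ast \mu_{2,d} + (\mu_{1,sc} \ast \mu_{2,sc})_{sc}$. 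Since $\mu$ has no singular continuous part, each summand must vanish; since $\mu$ has an atom, neither $\mu_{1,d}$ nor $\mu_{2,d}$ can be zero; hence $\mu_{1,sc} = \mu_{2,sc} = 0$. The discrete identity $\mu_{1,d} \ast \mu_{2,d} = p\, \delta_{x_0}$ then forces each $\mu_{j,d}$ to be a single atom $p_j \delta_{a_j}$ with $p_1 p_2 = p$ and $a_1 + a_2 = x_0$, because the support of a convolution of two discrete positive measures is the Minkowski sum of their supports, which can reduce to a singleton only when both factor supports are singletons.

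I expect the main obstacle to be the cross term $\mu_{1,sc} \ast \mu_{2,sc}$, whose Lebesgue decomposition could in principle spread across all three components. The argument sidesteps this because only the nonnegativity of $(\mu_{1,sc} \ast \mu_{2,sc})_{sc}$ enters; vanishing of the total singular continuous part of $\mu$ then forces the other two summands individually to zero regardless of how the cross term decomposes. With the structural claim in hand, each $\mu_j$ is of the form (\ref{1}) or a point mass, $\hat{\mu}_j$ is non-vanishing on $\bR$, and Theorem \ref{cor1} (together with the triviality of point masses) closes the argument.
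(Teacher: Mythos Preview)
Your proposal is correct and follows essentially the same route as the paper: decompose each factor into discrete, singular continuous, and absolutely continuous parts, expand the nine-term convolution, use that $\mu_1^d\ast\mu_2^d$ is the only discrete contribution (so each factor has a single atom) and that $\mu_j^d\ast\mu_i^{cs}$ is singular continuous (so each $\mu_i^{cs}$ vanishes), and then invoke Theorem~\ref{cor1}. Your treatment is in fact a bit more careful than the paper's, since you explicitly flag the ambiguous cross term $(\mu_{1,sc}\ast\mu_{2,sc})_{sc}$ and explain why its presence does not obstruct the argument.
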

\begin{proof}
We write
\begin{align*}
\mu_i=\mu_i^{d}+\mu_i^{ac}+\mu_i^{cs}
\end{align*}
for $i=1,2$ where $\mu_i^{d}$ is the discrete part, $\mu_i^{ac}$ is the absolute continuous part and $\mu_i^{cs}$ is the continuous singular part.
Hence, we can write 
\begin{align*}
\mu=\mu_1^{d}\ast\mu_2^{d}+\mu_1^{d}\ast\mu_2^{cs}+\mu_1^{d}\ast\mu_2^{ac}+\mu_1^{cs}\ast\mu_2^{d}+\mu_1^{cs}\ast\mu_2^{cs}+\mu_1^{cs}\ast\mu_2^{ac}+\mu_1^{ac}\ast\mu_2^{d}+\mu_1^{ac}\ast\mu_2^{cs}+\mu_1^{ac}\ast\mu_2^{ac}.
\end{align*}
As $\mu_1^{d}\ast\mu_2^{d}$ is the only discrete part, we conclude that $\mu_1$ and $\mu_2$ each have exactly one point mass. Moreover, $\mu_i^{cs}$ has to be zero for $i=1,2$, as $\mu_i^{cs}\ast \mu_j^{d}$ is continuous singular for $j\neq i$. So we can write
\begin{align*}
\mu=(p_1\delta_{z_1}(dx)+\mu_1^{ac})\ast(p_2\delta_{z_2}(dx)+\mu_2^{ac}),
\end{align*}
such that $p=p_1p_2$. It follows from Theorem $\ref{cor1}$ that $\mu_1$ and $\mu_2$ are quasi-infinitely divisible if and only if $\mu$ is quasi-infinitely divisible.
\end{proof}
It follows from Theorem \ref{cor1} that if $\mu$ is a distribution of the form (\ref{1}) with $\hat{\mu}(z)\neq 0$ for all $z\in\bR$, and if $\mu'$ is an infinitely-divisible (or quasi-infinitely divisible) distribution, then $\mu'\ast \mu$ is again quasi-infinitely divisible. This observation can be used to derive quasi-infinite divisibility of certain variance mixtures of normal distributions or more generally mixtures of distributions of L\'{e}vy processes. More precisely, we have:
\begin{corollary}\label{cor4.10}\mbox{}\\
{\it a)} Let $\varrho$ be a probability distribution on $\bR$  with $\varrho((-\infty,t_1))=0$ and $\varrho(\{ t_1\})>0$ for some $t_1>0$. Let $L=(L_t)_{t\ge 0}$ be a L\'{e}vy process such that $\mathcal{L}(L_t)$ is absolutely continuous for each $t>0$. Define the mixture $\mu:=\int_{[t_1,\infty)}\mathcal{L}(L_t)\,\varrho(dt)$ by
\begin{align}\label{eq100}
\mu(B):= \int\limits_{[t_1,\infty)} \mathcal{L}(L_t)(B)\,\varrho(dt),\quad B\in \mathcal{B}.
\end{align}
Then $\mu$ is quasi-infinitely divisible if and only if $\hat{\mu}(z)\neq 0$ for all $z\in\bR$. In particular, if $\varrho =\sum_{i=1}^n p_i\delta_{t_i}$ with $t_1<t_2<\dotso<t_n$ and $0<p_1,\dotso,p_n <1$, $\sum_{i=1}^n p_i=1$, then $\mu=\sum_{i=1}^n p_i \mathcal{L}(L_{t_i})$ is quasi-infinitely divisible if and only if $\hat{\mu}(z)\neq 0$ for all $z\in\bR$.\\
{\it b)} The assumption $\hat{\mu}(z)\neq 0$ for all $z\in\bR$ for $\mu$ of the form (\ref{eq100}) is in particular satisfied when $\mathcal{L}(L_1)$ is symmetric.
\end{corollary}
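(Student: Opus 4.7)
My plan for part (a) is as follows. Necessity is immediate, since for any quasi-infinitely divisible $\mu$ the characteristic function must be nowhere zero. For sufficiency, the strategy is to reduce to a distribution of the form (\ref{1}) and then invoke Theorem \ref{cor1}.

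The reduction exploits the L\'evy property $\mathcal{L}(L_t) = \mathcal{L}(L_{t_1}) \ast \mathcal{L}(L_{t-t_1})$ for $t \ge t_1$, combined with the hypothesis $p_1 := \varrho(\{t_1\}) > 0$. I would split the mixing integral at $t_1$ and pull out the common convolution factor to write
\begin{equation*}
\mu \;=\; \mathcal{L}(L_{t_1}) \ast \bigl( p_1 \delta_0 + (1-p_1)\sigma \bigr),
\end{equation*}
where $\sigma := (1-p_1)^{-1}\int_{(t_1,\infty)} \mathcal{L}(L_{t-t_1})\,\varrho(dt)$ is a probability measure, absolutely continuous because each $\mathcal{L}(L_{t-t_1})$ with $t > t_1$ is (using a jointly measurable version of the densities to pass the integral inside). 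The second convolution factor is of the form (\ref{1}) with $x_0 = 0$, and since $\widehat{\mathcal{L}(L_{t_1})}$ has no real zeros (as $\mathcal{L}(L_{t_1})$ is infinitely divisible), its characteristic function $\hat{\mu}(z)/\widehat{\mathcal{L}(L_{t_1})}(z)$ is nonvanishing exactly when $\hat{\mu}$ is. Theorem \ref{cor1} then gives quasi-infinite divisibility of that factor, and convolving with the infinitely divisible $\mathcal{L}(L_{t_1})$ preserves the property, yielding (a). The ``in particular'' statement for discrete $\varrho$ falls out as a special case with $t_1 = \min_i t_i$.

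For part (b), if $\mathcal{L}(L_1)$ is symmetric, then $\widehat{\mathcal{L}(L_1)}$ is real-valued, continuous, equal to $1$ at the origin, and nowhere zero by infinite divisibility, hence strictly positive on $\bR$. Therefore $\widehat{\mathcal{L}(L_t)}(z) = \widehat{\mathcal{L}(L_1)}(z)^t > 0$ for every $t > 0$, and integrating against $\varrho$ gives $\hat{\mu}(z) > 0$ for all $z \in \bR$. The only real step requiring care is verifying absolute continuity of $\sigma$ through a measurable selection of the densities of $\mathcal{L}(L_s)$ in $s$; beyond that point, the argument is a direct reduction to Theorem \ref{cor1} via the L\'evy factorization.
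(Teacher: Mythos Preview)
Your argument is correct and is essentially the same as the paper's: both factor $\mu$ as $\mathcal{L}(L_{t_1}) \ast \bigl(\varrho(\{t_1\})\delta_0 + \int_{(t_1,\infty)} \mathcal{L}(L_{t-t_1})\,\varrho(dt)\bigr)$, observe that the second factor is a point mass plus an absolutely continuous measure, and then apply Theorem~\ref{cor1} together with the infinite divisibility of $\mathcal{L}(L_{t_1})$; part (b) is likewise handled identically via strict positivity of $\hat{\mu}_t$. The only cosmetic difference is that you normalize the absolutely continuous part to a probability measure $\sigma$ (which tacitly assumes $p_1<1$, a harmless edge case since $p_1=1$ gives $\mu=\mathcal{L}(L_{t_1})$ directly), and you flag the measurability of $t\mapsto\mathcal{L}(L_{t-t_1})(B)$, which the paper takes for granted.
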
 
\begin{proof}
{\it a)} Write $\mu_t=\mathcal{L}(L_t)$. Then
\begin{align*}
\mu&=\int\limits_{[t_1,\infty)} \mu_t \,\varrho(dt)=\varrho(\{t_1\}) \mu_{t_1}+\int\limits_{(t_1,\infty)}\mu_t \varrho(dt)\\
&=\mu_{t_1}\ast\left(\varrho(\{t_1\})\delta_0+\int\limits_{(t_1,\infty)}\mu_{t-t_1}\varrho(dt)\right).
\end{align*}
Assume that $\hat{\mu}(z)\neq 0$ for all $z\in\bR$. Then $(\varrho(\{t_1\})\delta_0+\int\limits_{(t_1,\infty)}\mu_{t-t_1}\varrho(dt))\,\,\widehat{}\,(z)\neq 0$ for all $z\in\bR$. Since $\mu_{t-t_1}$ is absolutely continuous for all $t>t_1$, so is $\int\limits_{(t_1,\infty)}\mu_{t-t_1}\varrho(dt)$. Hence $\varrho(\{t_1\})\delta_0+\int\limits_{(t_1,\infty)}\mu_{t-t_1}\varrho(dt)$ is quasi-infinitely divisible by Theorem \ref{cor1}. Since  $\mu_1$ is infinitely divisible, this shows quasi-infinite divisibility of $\mu$. The converse and the specialization to $\varrho=\sum_{i=1}^n p_i\delta_{t_i}$ are clear.\\
{\it b)} This follows from the fact that
\begin{align*}
\hat{\mu}(z)=\int\limits_{[t_0,\infty)} \hat{\mu}_t \,\varrho(dt)
\end{align*}
and that $\hat{\mu}_t(z)>0$ when $\mu_t$ is symmetric.
\end{proof}
Corollary \ref{cor4.10} applies in particular when $L$ is a standard Brownian motion and hence to variance mixtures of the form $\sum_{i=1}^n p_i N(0,a_i)$ or more generally to variance mixtures of the form $\int_{[t_1,\infty)} N(0,t)\varrho(dt)$ when $\varrho(\{t_1\})>0$ and $t_1>0$. That a variance mixture of the form $pN(0,a)+(1-p)N(0,b)$ with $0<a<b$ and $p\in (1/2,1)$ is quasi-infinitely divisible was already observed in [\ref{Lindner}, Example 3.6]. Corollary \ref{cor4.10} improves in particular on that result in the sense that it shows that $p\in (1/2,1)$ is superfluous.\\
Observe that a distribution of the form $\int_{[0,\infty)}N(0,t)\varrho(dt)$ cannot be infinitely divisible when the support of $\varrho$ is additionally bounded and $\varrho$ is non-degenerate, see [\ref{Kelker}, Theorem 2], but it is infinitely divisible if $\varrho$ is infinitely divisible (e.g.  [\ref{Steutel}, Example \rom{IV}, 11.6). Hence Corollary \ref{cor4.10} sheds some further light onto the behaviour of variance mixtures of normal distributions.
\begin{remark}\label{rem4.11}
 Corollary \ref{cor4.10} continues to hold when $L$ is replaced by an additive process for which all increment distribution $\mathcal{L}(L_t-L_s)$ with $0<s<t$ are absolutely continuous. The proof is exactly the same as in Corollary \ref{cor4.10}. In particular, $\mu=\sum_{i=1}^n p_i N(b_i,a_i)$ is quasi-infinitely divisible for $0<p_1,\dotso,p_n<1,$ $\sum_{i=1}^n p_i=1$, \\$0<a_1<a_2<\dotso<a_n$ and $b_1,\dotso,b_n\in\bR$ if and only if $\hat{\mu}(z)\neq 0$ for all $z\in\bR$.
\end{remark}
\subsection{Absolutely continuous distributions plus a lattice distribution}\mbox{}\\
Until now we considered distributions of the form $p\delta_{x_0}+\mu_{ac}$, where $p>0$ and $\mu_{ac}$ was absolutely continuous. We will now generalise Theorem \ref{cor1} to distributions of the form 
\begin{align*}
\mu=\mu_d+\mu_{ac},
\end{align*}
where $\mu_d$ is a non-zero discrete measure supported on a lattice with non-vanishing  characteristic function, and $\mu_{ac}$ is absolutely continuous. First we recall the classical Wiener-lemma.
\begin{theorem}\label{Wiener}
Let $f(z)=\sum\limits_{k\in\bZ}c_ke^{ikz}$ with $\sum_{k\in\bZ}|c_k|<\infty$ such that $f(z)\neq 0$ for every $z\in\bR$. Then there exists a function $g(z)=\sum\limits_{k\in\bZ}d_k e^{ikz}$ with $\sum_{k\in\bZ}|d_k|<\infty$ such that
\begin{align*}
f(z)g(z)=1
\end{align*}
for every $z\in\bR$.
\end{theorem}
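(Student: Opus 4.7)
The plan is to recognise $A := \{f(z) = \sum_{k \in \bZ} c_k e^{ikz} : \sum_k |c_k| < \infty\}$, equipped with the norm $\|f\|_A := \sum_k |c_k|$, as a commutative unital Banach algebra under pointwise multiplication and apply Gelfand theory. Submultiplicativity $\|fg\|_A \le \|f\|_A \|g\|_A$ follows from the convolution formula for Fourier coefficients together with the triangle inequality in $\ell^1(\bZ)$, and completeness is inherited from $\ell^1(\bZ)$. The constant function $1$ (coefficient sequence $\delta_{k,0}$) is the multiplicative unit.

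The main step is to identify the character space $\Delta(A)$ with the circle $\mathbb{R}/2\pi\mathbb{Z}$. Each $z_0 \in \bR$ yields a character $\phi_{z_0}(f) := f(z_0)$, which is clearly multiplicative and continuous. Conversely, given any character $\phi: A \to \bC$, set $\alpha := \phi(e^{iz})$. Since $e^{iz}$ and $e^{-iz}$ both have $A$-norm equal to $1$, and since characters on a unital Banach algebra have operator norm at most $1$, we get $|\alpha| \le 1$ and $|\alpha^{-1}| = |\phi(e^{-iz})| \le 1$, forcing $|\alpha| = 1$. Hence $\alpha = e^{iz_0}$ for some $z_0 \in \bR$. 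By multiplicativity, $\phi(e^{ikz}) = \alpha^k = e^{ikz_0} = \phi_{z_0}(e^{ikz})$ for all $k \in \bZ$. Since trigonometric polynomials are dense in $A$ (truncate the Fourier series and use $\|\cdot\|_A$-control) and $\phi$, $\phi_{z_0}$ are both continuous on $A$, we conclude $\phi = \phi_{z_0}$.

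Given this identification, Gelfand's theorem gives that $f \in A$ is invertible in $A$ iff $\phi(f) \neq 0$ for every character $\phi$, which by the previous step is precisely the condition $f(z) \neq 0$ for every $z \in \bR$. The inverse $g := f^{-1}$ automatically lies in $A$, so it has an absolutely convergent Fourier expansion $g(z) = \sum_k d_k e^{ikz}$ with $\sum_k |d_k| < \infty$, which is the conclusion.

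The main obstacle is the claim that every character of $A$ is a point evaluation; this is the genuinely nontrivial content and relies on the norm equality $\|e^{\pm iz}\|_A = 1$ together with density of trigonometric polynomials. Everything else is essentially bookkeeping. A proof avoiding Gelfand theory is Wiener's original local-to-global argument: for each $z_0$ with $f(z_0) \neq 0$, exhibit $1/f$ as an absolutely convergent Fourier series on a neighbourhood of $z_0$ via a Neumann series for the local reciprocal, then use compactness of $\mathbb{R}/2\pi\mathbb{Z}$ and a partition of unity constructed \emph{within} $A$ to patch these local inverses into a global one. This is more hands-on but requires building the partition of unity inside $A$, so the Banach-algebraic route is the one I would follow.
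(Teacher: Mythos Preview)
Your argument is correct: recognising the Wiener algebra as a commutative unital Banach algebra, identifying its maximal ideal space with $\bR/2\pi\bZ$ via the norm constraint $\|e^{\pm iz}\|_A=1$, and then invoking the Gelfand invertibility criterion is the standard modern proof of Wiener's lemma, and all the steps you outline go through as stated.

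There is nothing to compare here, however, because the paper does not give its own proof of this statement at all: it simply cites [\ref{Muscalu}, Corollary 4.27] and moves on. (The Gelfand-theoretic proof you sketch is in fact the one given in that reference.) So your proposal supplies what the paper deliberately outsources; it is neither the same as nor different from the paper's argument, because the paper has none.
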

\begin{proof}
See [\ref{Muscalu}, Corollary 4.27].
\end{proof}
Now we prove the aforementioned generalisation.
\begin{theorem}\label{theorem3}
Let $\mu$ be a probability distribution of the form
\begin{align*}
\mu(dx)=\mu_d(dx)+f(x)\lambda(dx),
\end{align*}
where $\mu_d$ is a non-zero discrete measure supported on a lattice of the form $r+h\bZ$ for some $r\in\bR$ and $h>0$, and $f\in L^1(\bR,[0,\infty))$. Then $\mu$ is quasi-infinitely divisible if and only if $\hat{\mu}(z)\neq 0$ for all $z\in\bR$. In that case, the Gaussian variance of $\mu$ is zero and the quasi-L\'{e}vy measure $\nu$ satisfies $\int_{-1}^1 |x|\,|\nu|(dx)<\infty$.
\end{theorem}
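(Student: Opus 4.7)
The necessary direction is immediate from the definition: if $\mu$ is quasi-infinitely divisible, then $\hat\mu$ is an exponential and therefore nowhere zero on $\bR$. So I focus on sufficiency. After translating by $-r$ and rescaling the space by $1/h$ (both operations preserving quasi-infinite divisibility), I reduce to the case in which $\mu_d$ is supported on $\bZ$. The basic idea is then to factor $\hat\mu=\hat{\mu_d}\cdot(\hat\mu/\hat{\mu_d})$ and to handle the two pieces by Theorem~8.1 of Lindner et al.\ for the lattice factor and Theorem~\ref{Theorem2} for the quotient.

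Under the hypothesis that $\hat{\mu_d}$ is non-vanishing on $\bR$ (the condition indicated in the introductory paragraph of this subsection), the plan runs as follows. Wiener's lemma (Theorem~\ref{Wiener}) gives that $1/\hat{\mu_d}$ has absolutely convergent Fourier series, so $\hat f/\hat{\mu_d}=\hat g$ for some $g\in L^1(\bR,\bC)$. Consequently
\[
\tilde F(z):=\mu_d(\bR)\bigl(1+\hat g(z)\bigr)=\mu_d(\bR)\,\hat\mu(z)/\hat{\mu_d}(z)
\]
belongs to $W(\bR,\bC)$ with $\tilde F(0)=1$, is non-zero on $\bR$ (since both $\hat\mu$ and $\hat{\mu_d}$ are), and satisfies $\tilde F(z)\to\mu_d(\bR)\neq 0$ as $|z|\to\infty$ by Riemann--Lebesgue. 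Theorem~\ref{Theorem2} then produces a L\'evy--Khintchine representation of $\tilde F$ with absolutely continuous quasi-L\'evy-type measure
\[
\nu_{\mathrm{ac}}(dx)=\Bigl(g_0(x)+\tfrac{m e^{-|x|}}{|x|}\bigl(\one_{(0,\infty)}(x)-\one_{(-\infty,0)}(x)\bigr)\Bigr)\lambda(dx),
\]
for some $g_0\in L^1(\bR,\bC)$ and $m=\ind(\tilde F)\in\bZ$; applying Theorem~\ref{theorem1} to the formally complex quasi-L\'evy-type measure just obtained shows $g_0$ is real-valued, so $\nu_{\mathrm{ac}}$ is a genuine quasi-L\'evy measure. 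In parallel, Theorem~8.1 of Lindner et al.\ applied to the lattice probability distribution $\mu_d/\mu_d(\bR)$ on $\bZ$ furnishes a L\'evy--Khintchine representation of $\hat{\mu_d}/\mu_d(\bR)$ with a quasi-L\'evy measure $\nu_d$ supported on $\bZ\setminus\{0\}$ of finite total variation. Multiplying the two representations makes the factors $\mu_d(\bR)$ cancel and delivers $\hat\mu=\exp(\Psi)$ with quasi-L\'evy measure $\nu=\nu_{\mathrm{ac}}+\nu_d$ and Gaussian variance $0$. The required bound $\int_{-1}^1|x|\,|\nu|(dx)<\infty$ is inherited componentwise: $\nu_d$ does not charge $[-1,1]$, and the density of $\nu_{\mathrm{ac}}$ is dominated by $|g_0(x)|+|m|e^{-|x|}/|x|$, which is integrable against $|x|$ on $[-1,1]$.

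The main obstacle is whether the auxiliary hypothesis that $\hat{\mu_d}$ is non-vanishing can be dropped. If $\hat{\mu_d}(\theta_0)=0$ for some $\theta_0$, then Riemann--Lebesgue forces $\hat\mu(\theta_0+2\pi n)=\hat f(\theta_0+2\pi n)\to 0$ as $n\to\infty$, while $\hat\mu(2\pi n)\to\mu_d(\bR)>0$; reconciling this directional decay of $|\hat\mu|$ with an L\'evy--Khintchine exponent $\int(\cos(xz)-1)\nu(dx)$ looks delicate, since lattice and absolutely continuous quasi-L\'evy measures both tend to produce either uniformly bounded or uniformly decaying real parts of $\Psi$. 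I would attack this residual case by a perturbation argument: smooth out all atoms of $\mu_d$ except one chosen atom at the origin by convolving with $N(0,\varepsilon)$, apply Theorem~\ref{cor1} to the resulting single-atom-plus-absolutely-continuous distribution (after verifying non-vanishing of its characteristic function for small $\varepsilon$), and pass to the limit $\varepsilon\downarrow 0$ while controlling the characteristic triplets; alternatively, a sharper Wiener--L\'evy-type theorem for the Banach algebra $\ell^1(\bZ)+L^1(\bR)$, whose Gelfand spectrum also contains characters at infinity indexed by $\bR/2\pi\bZ$, should handle the general case in a unified manner.
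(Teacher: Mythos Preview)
Your approach is essentially the same as the paper's: factor $\hat\mu$ as $\hat{\mu_d}$ times the quotient, use Wiener's lemma to put the quotient in $W(\bR,\bC)$, apply Theorem~\ref{Theorem2} to the quotient and Theorem~8.1 of Lindner et al.\ to the lattice part, then combine. One small correction: you invoke Theorem~\ref{theorem1} on $\tilde F$ to force $g_0$ to be real, but Theorem~\ref{theorem1} is stated only for characteristic functions of \emph{probability distributions}, and $\tilde F$ is merely the Fourier transform of the complex measure $\mu_d(\bR)(\delta_0+g\lambda)$. The paper handles this by first multiplying the (possibly complex) L\'evy--Khintchine representation of the quotient with the genuine quasi-infinitely divisible representation of $\mu_d$, obtaining a complex L\'evy--Khintchine representation of $\hat\mu$ itself, and only then applying Theorem~\ref{theorem1} to $\mu$; simply postpone your appeal to Theorem~\ref{theorem1} to after the multiplication and the argument is clean.

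Your final paragraph, on dropping the hypothesis that $\hat{\mu_d}$ be zero-free, addresses a case the paper does not treat: as you yourself note, the non-vanishing of $\hat{\mu_d}$ is a standing assumption in this subsection, and the paper's proof uses it exactly where yours does (in the application of Wiener's lemma). So there is no residual case to handle for the theorem as intended.
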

\begin{proof}
By shifting and scaling the distribution, we assume without loss of generality that $\supp \mu_d\subset \bZ$, hence we can write
\begin{align*}
\mu_d(dx)=\sum\limits_{k\in\bZ} p_k \delta_k(dx).
\end{align*}
Its characteristic function is given by
\begin{align*}
\hat{\mu}_d(z)=\sum\limits_{k\in\bZ} p_k e^{ikz}.
\end{align*}
Now by Theorem \ref{Wiener} there exists a function $g$ with $g(z)\hat{\mu}_d(z)=1$ for all $z\in\bR$ and
\begin{align*}
g(z)=\sum\limits_{k\in\bZ}c_k e^{ikz}
\end{align*}
and $\sum_{k\in\bZ}|c_k|<\infty$. We can associate a complex measure $\varrho$ such that
\begin{align*}
\varrho(dx)=\sum\limits_{k\in\bZ} c_k \delta_k(dx),
\end{align*}
and especially we conclude that
\begin{align*}
\mu_d\ast \varrho=\delta_0.
\end{align*}
Now we decompose $\mu$ as follows
\begin{align*}
\mu=\mu_d\ast(\delta_0+\varrho \ast \mu_{ac}),
\end{align*}
where $\mu_{ac}(dx)=f(x)\lambda(dx)$. Since $\hat{\mu}_d(z)\neq 0$ for all $z\in\bR$, $\mu_d$ is quasi-infinitely divisible with finite quasi-L\'{e}vy measure by [\ref{Lindner}, Theorem 8.1]. Furthermore, $\varrho\ast \mu_{ac}$ is absolutely continuous, hence there exists some $g\in L^1(\bR,\bC)$ such that $\varrho\ast\mu_{ac}=g(x)\lambda(dx)$. Theorem \ref{Theorem2} then shows that $(\delta_0+\varrho\ast \mu_{ac})\,\,\widehat{}\,$ has a (possibly complex) L\'{e}vy-Khintchine type representation, and since $\mu_{d}$ is quasi-infinitely divisible, it follows from Theorem \ref{theorem1} that $\mu$ is quasi-infinitely divisible if and only if $\hat{\mu}(z)\neq 0$ for all $z\in\bR$. That the Gaussian variance of $\mu$ is zero and the quasi-L\'{e}vy measure satisfies  $\int_{-1}^1 |x|\,|\nu|(dx)<\infty$ then follows from Theorem \ref{cor1} and its proof.
\end{proof}
As in Corollary \ref{fac1}, we can now show that factors of a quasi-infinitely divisible distribution of the form $\mu=\mu_d+\mu_{ac}$ as above are also quasi-infinitely divisible. 
\begin{corollary} Let $\mu$ be of the form $\mu(dx)=\mu_d(dx)+f(x)\lambda(dx)$ as above with $\hat{\mu}_d(z)\neq 0$ for all $z\in\bR$ and $\mu_d$ being concentrated on a lattice. Let $\mu=\mu_1\ast\mu_2$ be a factorisation of $\mu$. Then $\mu$ is quasi-infinitely divisible if and only if $\mu_1$ and $\mu_2$ are quasi-infinitely divisible.
\end{corollary}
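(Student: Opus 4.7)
The plan is to follow the template of Corollary~\ref{fac1} closely, with Theorem~\ref{cor1} replaced by Theorem~\ref{theorem3}. The \emph{if} direction is immediate: the characteristic triplet of $\mu_1\ast\mu_2$ is the sum of the triplets of $\mu_1$ and $\mu_2$, so convolution of quasi-infinitely divisible distributions is quasi-infinitely divisible.

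For the converse, assume $\mu$ is quasi-infinitely divisible. Then $\hat\mu(z)\neq 0$ on $\bR$ by Theorem~\ref{theorem3}, and since $\hat\mu=\hat\mu_1\hat\mu_2$, both $\hat\mu_1$ and $\hat\mu_2$ are zero-free on $\bR$. I would Lebesgue-decompose $\mu_i=\mu_i^{d}+\mu_i^{ac}+\mu_i^{cs}$ for $i=1,2$, expand $\mu_1\ast\mu_2$ into the nine cross convolutions, and classify each by Lebesgue type (convolution with an absolutely continuous measure is absolutely continuous; discrete $\ast$ continuous singular is continuous singular; discrete $\ast$ discrete is discrete). Since $\mu$ has no continuous singular part, uniqueness of the Lebesgue decomposition together with the positivity of each summand forces
\[
\mu_d=\mu_1^{d}\ast\mu_2^{d}\qquad\text{and}\qquad \mu_1^{d}\ast\mu_2^{cs}=\mu_1^{cs}\ast\mu_2^{d}=0.
\]
Because $\mu_d\neq 0$ the factors $\mu_1^{d}$ and $\mu_2^{d}$ are non-zero, and comparing total masses in $\mu_1^{d}\ast\mu_2^{cs}=0$ gives $\mu_2^{cs}=0$; symmetrically $\mu_1^{cs}=0$.

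Next I would verify that each $\mu_i^{d}$ is supported on a lattice of the form demanded by Theorem~\ref{theorem3}. Fixing any $a\in\supp\mu_1^{d}$, the inclusion $\supp(\mu_1^{d}\ast\mu_2^{d})\subseteq r+h\bZ$ forces $\supp\mu_2^{d}\subseteq r-a+h\bZ$, and symmetrically for $\mu_1^{d}$. Hence each $\mu_i=\mu_i^{d}+\mu_i^{ac}$ fits the hypotheses of Theorem~\ref{theorem3}, and since $\hat\mu_i$ has no real zeros, Theorem~\ref{theorem3} yields that $\mu_i$ is quasi-infinitely divisible.

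The main (though mild) obstacle is the Lebesgue-type bookkeeping of the nine convolution summands: the cross term $\mu_1^{cs}\ast\mu_2^{cs}$ may a priori have both an absolutely continuous and a continuous singular component, so only its c.s.\ part contributes to the c.s.\ identity. Positivity of every summand, coupled with $\mu^{cs}=0$, rules out cancellation and makes the identities above valid; once past this bookkeeping the argument reduces cleanly to an application of Theorem~\ref{theorem3}.
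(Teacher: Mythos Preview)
Your argument follows the paper's almost verbatim: decompose into nine cross-convolutions, identify $\mu_d=\mu_1^d\ast\mu_2^d$, kill the continuous singular parts, and apply Theorem~\ref{theorem3} to each factor. The one substantive difference is how the lattice structure of $\mu_i^d$ is obtained: the paper invokes [\ref{Lindner}, Corollary~8.3], which simultaneously yields that each $\mu_i^d$ is a lattice distribution \emph{and} that $\hat\mu_i^d$ is zero-free, whereas you prove the lattice part by a direct support-shift argument. Your argument is self-contained and correct for the lattice claim, but you omit the verification that $\hat\mu_i^d(z)\neq 0$ for all $z\in\bR$, which is part of the hypotheses needed to apply Theorem~\ref{theorem3} (and is used in its proof via Wiener's lemma). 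This is trivially filled: $\hat\mu_d=\hat\mu_1^d\cdot\hat\mu_2^d$ is zero-free by assumption, hence so are both factors. With that one-line addition your proof is complete and matches the paper's.
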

\begin{proof}
Assume $\hat{\mu}(z)\neq 0$ for all $z\in\bR$. As in Corollary \ref{fac1} we can write $\mu$ as
\begin{align*}
\mu=\mu_1^{d}\ast\mu_2^{d}+\mu_1^{d}\ast\mu_2^{cs}+\mu_1^{d}\ast\mu_2^{ac}+\mu_1^{cs}\ast\mu_2^{d}+\mu_1^{cs}\ast\mu_2^{cs}+\mu_1^{cs}\ast\mu_2^{ac}+\mu_1^{ac}\ast\mu_2^{d}+\mu_1^{ac}\ast\mu_2^{cs}+\mu_1^{ac}\ast\mu_2^{ac}.
\end{align*}
Now we know that $\mu_1^{d}\ast\mu_2^{d}$ is the only discrete part of $\mu$, so we conclude
\begin{align*}
\mu_d=\mu_1^{d}\ast\mu_2^{d}.
\end{align*}
By [\ref{Lindner}, Corollary 8.3] we know that $\mu_1^{d}$ and $\mu_2^{d}$ are lattice distributions with non-vanishing characteristic functions. $\mu^d_i\ast\mu^{cs}_j$ is continuous singular for $i\neq j$, hence $\mu_i^{cs}=0$ and from Theorem \ref{theorem3} we conclude that $\mu_1$ and $\mu_2$ are quasi-infinitely divisible.\\
The converse is clear.
\end{proof}
In Remark \ref{rem4.11} we characterized quasi-infinite divisibility of $\sum_{i=1}^n p_i N(b_i,a_i)$ as long as $a_1<a_2<\dotso<a_n$. With the aid of Theorem \ref{theorem3}, we can now also consider the case when $a_1\le a_2\le \dotso \le a_n$, provided the $b_i$ satisfy a small restriction:
\begin{example}
Let $\mu=\sum\limits_{i=1}^n p_i \mu_i$ with $b_i,\dotso, b_n\in \bR$ , $0<a_1\le a_2\le \dotso \le a_n$, $0<p_1,\dotso,p_n<1$, $\sum_{i=1}^np_i=1$, $\mu_i\sim N(b_i,a_i)$ and $\hat{\mu}(z)\neq 0$ for every $z\in\bR$. We denote by $J$ the set of indices  for which $b_i=b_1$ and assume that all $b_i, \, i\in J$, lie on a lattice and that
\begin{align*}
\sum\limits_{j\in J} p_j e^{ib_jz}\neq 0
\end{align*}
for $z\in\bR$. Then $\mu$ is quasi-infinitely divisible.
\end{example}
\begin{proof}
We decompose $\mu$ as
\begin{align*}
\mu=\tilde{\mu}\ast(\sum\limits_{i\in \{1,\dotso,n\}\setminus J}p_i \tilde{\mu_i}+\sum\limits_{j\in J}p_j \delta_{b_j})
\end{align*}
with $\tilde{\mu_i}\sim N(b_i,a_i-a_1)$ and $\tilde{\mu}\sim N(0,a_1)$. We conclude from Theorem \ref{theorem3} that $\mu$ is quasi-infinitely divisible.
\end{proof}
\section{Topological Properties of QIDs}
In [\ref{Lindner}, Theorem 4.1] it was shown that the class of infinitely divsible distributions on $\bR$ is dense in the class of all probability distributions with respect to weak convergence. Since distributions exists that are not quasi-infinitely divisible, the class of quasi-infinitely divisible distributions can not be closed, unlike the class of infinitely divisible distributions. In this section we show that the class of infinitely divisible distributions is neither open. However, it is path-connected.\\
Denote by  $\mathcal{P}(\bR)$ the set of all probability measures. Denote by $\pi:\mathcal{P}(\bR)\times\mathcal{P}(\bR)\to [0,\infty)$ the Prokhorov metric on $\mathcal{P}(\bR)$. Then it is known that $(\mathcal{P}(\bR),\pi)$ is a complete metric space, and that the topology defined by the weak convergence is the same as for $\pi$, i.e. for a sequence $(\mu_n)_{n\in\bN}\subset\mathcal{P}(\bR)$ and $\mu\in \mathcal{P}(\bR)$ weak convergence of $\mu_n$ to $\mu$ is equivalent to $\pi(\mu_n,\mu)\to 0$ as $n\to \infty$, see [\ref{Billingsley}, Theorem 6.8, p. 73]. Now we can show: 
\begin{proposition}
The set $QID(\bR)$ of all quasi-infinitely divisible distribution on $\bR$ is not open in the space $(\mathcal{P}(\bR),\pi)$. Moreover, $\mathcal{P}(\bR)\setminus QID(\bR)$ is dense in $(\mathcal{P}(\bR),\pi)$.
\end{proposition}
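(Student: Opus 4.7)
The plan is to prove density of the complement first, then deduce non-openness as an immediate consequence. The crucial observation is that any quasi-infinitely divisible distribution $\mu$ has $\hat{\mu}(z) \neq 0$ for all $z \in \bR$, since $\hat{\mu}$ is, by definition, the exponential of a (possibly complex) expression; hence any probability distribution whose characteristic function has a real zero automatically fails to be quasi-infinitely divisible. This gives us a cheap way to manufacture elements of $\mathcal{P}(\bR)\setminus QID(\bR)$.

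For the density of the complement, I would fix an arbitrary $\mu\in\mathcal{P}(\bR)$ and set
\begin{align*}
\mu_n := \mu \ast U_n, \qquad U_n := \text{uniform distribution on }[-1/n,1/n].
\end{align*}
Since $U_n \xrightarrow{d} \delta_0$, convolution continuity of weak convergence yields $\mu_n \xrightarrow{d} \mu$, equivalently $\pi(\mu_n,\mu)\to 0$. On the Fourier side,
\begin{align*}
\hat{\mu}_n(z) = \hat{\mu}(z)\cdot \frac{\sin(z/n)}{z/n},
\end{align*}
which has real zeros at $z = kn\pi$ for every $k\in\bZ\setminus\{0\}$. Consequently, by the remark in the previous paragraph, $\mu_n \notin QID(\bR)$ for every $n$. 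Thus every element of $\mathcal{P}(\bR)$ is a weak limit of non-QID distributions, proving that $\mathcal{P}(\bR)\setminus QID(\bR)$ is dense in $(\mathcal{P}(\bR),\pi)$.

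To deduce that $QID(\bR)$ is not open, note that it is nonempty (for example, every infinitely divisible distribution, such as $\delta_0$, lies in $QID(\bR)$). If $QID(\bR)$ were open, then, picking any $\mu\in QID(\bR)$, some Prokhorov ball around $\mu$ would be entirely contained in $QID(\bR)$, contradicting the fact that the complement is dense and therefore intersects every nonempty open set. Hence $QID(\bR)$ is not open.

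There is essentially no serious obstacle here: the heart of the argument is the convolution trick producing real zeros in $\hat{\mu}_n$, after which density and non-openness drop out immediately. The only point worth being careful about is recording the elementary fact that a QID characteristic function cannot vanish on the real line, since this is what converts ``$\hat{\mu}_n$ has a zero'' into ``$\mu_n \notin QID(\bR)$.''
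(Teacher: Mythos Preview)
Your proof is correct and follows essentially the same approach as the paper: convolve $\mu$ with a rescaled distribution whose characteristic function has a real zero, observe that the rescaled factor converges weakly to $\delta_0$, and conclude density of the complement. The paper phrases this with a generic $\nu$ having $\hat{\nu}$ vanish somewhere and uses $\mu_n=\mu\ast\nu(n\,\cdot)$, while you make the concrete choice $\nu=$ uniform on $[-1,1]$; this is the only difference and it is cosmetic.
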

\begin{proof}
Let $\mu,\nu\in \mathcal{P}(\bR)$ be such that the characteristic function $\hat{\nu}$ has zeroes on $\bR$. We define the sequence of measures
\begin{align*}
\mu_n(dx)=\mu(dx)\ast \nu(n\,dx)
\end{align*}
for every $n\in \bN$. Then $\mu_n$ is clearly not quasi-infinitely divisible as its characteristic function has zeroes on $\bR$, since
\begin{align*}
\hat{\mu_n}(z)=\hat{\mu}(z)\hat{\nu}(\frac{z}{n}).
\end{align*}
Moreover, $\mu_n\stackrel{d}{\to}\mu$. This shows that $\mathcal{P}(\bR)\setminus QID(\bR)$ is dense in $(\mathcal{P}(\bR),\pi)$. In particular, $QID(\bR)$ cannot be open.
\end{proof}
Now we show that $QID(\bR)$ is path-connected. Recall that for a metric space $(X,d)$, a subset $Y\subseteq X$ is called path-connected if for every $x,y\in Y$ there exists a continuous function $p:[0,1]\to Y$ such that $p(0)=x$ and $p(1)=y$.
\begin{proposition}
The space of quasi-infinitely divisible distributions is path-connected, especially connected.
\end{proposition}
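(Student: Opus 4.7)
The plan is to show that every $\mu\in QID(\bR)$ is joined to the Dirac measure $\delta_0$ by a continuous path within $QID(\bR)$. Path-connectedness then follows by concatenating two such paths, one for $\mu_0$ and one for $\mu_1$. For a fixed $\mu\in QID(\bR)$ and a random variable $X\sim \mu$, I would use the path
\begin{align*}
p_\mu:[0,1]\to \mathcal{P}(\bR),\qquad p_\mu(t):=\mathcal{L}(tX),
\end{align*}
so that $p_\mu(0)=\delta_0$ and $p_\mu(1)=\mu$.

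The first step is to verify $p_\mu(t)\in QID(\bR)$ for every $t\in[0,1]$. For $t=0$ this is trivial since $\delta_0$ is infinitely divisible. For $t\in(0,1]$, the characteristic function of $p_\mu(t)$ at $z$ equals $\hat\mu(tz)$, and substituting $y=tx$ in the L\'evy-Khintchine integral of $\mu$ expresses $\hat\mu(tz)$ as the exponential of a L\'evy-Khintchine expression with Gaussian variance $t^2 a$, quasi-L\'evy-type measure $\nu_t$ given by the pushforward of $\nu$ under $x\mapsto tx$, and drift $\gamma t$ corrected by the finite term $\int_\bR y(\one_{[-1,1]}(y)-\one_{[-t,t]}(y))\,\nu_t(dy)$ that accounts for the mismatch between $\one_{[-1,1]}(y/t)=\one_{[-t,t]}(y)$ and the standard truncation $\one_{[-1,1]}(y)$. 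Since
\begin{align*}
\int_\bR (1\wedge y^2)\,|\nu_t|(dy)=\int_\bR (1\wedge t^2 x^2)\,|\nu|(dx)\leq \int_\bR (1\wedge x^2)\,|\nu|(dx)<\infty
\end{align*}
for $t\in[0,1]$, $\nu_t$ is indeed a quasi-L\'evy-type measure, and the correction integral is finite because $|\nu_t|$ is finite on $[-1,-t)\cup(t,1]$.

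Continuity of $p_\mu:[0,1]\to(\mathcal{P}(\bR),\pi)$ is immediate: $t_n\to t$ in $[0,1]$ implies $t_n X\to tX$ almost surely, hence $p_\mu(t_n)\stackrel{d}{\to}p_\mu(t)$, which is equivalent to $\pi(p_\mu(t_n),p_\mu(t))\to 0$. Concatenating $t\mapsto p_{\mu_0}(1-2t)$ on $[0,1/2]$ with $t\mapsto p_{\mu_1}(2t-1)$ on $[1/2,1]$ yields the required continuous path from $\mu_0$ to $\mu_1$ in $QID(\bR)$. The only nontrivial point is the verification that scaling preserves the quasi-L\'evy-type structure, which is essentially free from the quadratic integrability condition together with $t\le 1$, and I do not expect any further obstacle.
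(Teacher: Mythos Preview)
Your proof is correct and follows essentially the same approach as the paper: both arguments rest on the observation that rescaling a quasi-infinitely divisible distribution again yields a quasi-infinitely divisible distribution (verified by rewriting the L\'evy--Khintchine exponent after the substitution $y=tx$), and both establish continuity of the resulting path via pointwise convergence of characteristic functions. The only cosmetic difference is that the paper builds a single direct path $t\mapsto \mathcal{L}(tX_1)\ast\mathcal{L}((1-t)X_2)$, whereas you route through $\delta_0$ and concatenate.
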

\begin{proof}
Let $\mu_1$ and $\mu_2$ be two quasi-infinitely divisible distributions. Then it holds that
\begin{align*}
\mu_t(dx):= \mu_1(\frac{1}{t}dx)\ast \mu_2(\frac{1}{1-t}dx)
\end{align*}
is also quasi-infinitely divisble for $t\in (0,1)$. This holds because
\begin{align*}
p_t(dx):=\mu_1(\frac{1}{t}dx)
\end{align*}
has the characteristic function
\begin{align*}
\hat{p_t}(z):=\hat{\mu_1}(zt)=\exp\left( -\frac{1}{2}a t^2z^2+i\gamma t z+\int\limits_{\bR}\left(e^{izx}-1-izx\one_{[-1,1]}\right)\nu(\frac{1}{t}dx) \right)
\end{align*}
where $\mu_1$ is quasi-infinitely divisible with characteristic triplet $(a,\gamma,\nu)$. Similarly
\begin{align*}
\mu_2(\frac{1}{1-t}dx)
\end{align*}
is also quasi-infinitely divisible. We conclude that $\mu_t(dx)$ is quasi-infinitely divisible for every $t\in(0,1)$. Moreover
$p:[0,1]\to \mathcal{P}(\bR)$ with $p(0)=\mu_1$, $p(1)=\mu_2$ and $p(t)=\mu_t$ is continuous, because it holds $\hat{\mu_t}(z)\to \hat{\mu_{t_0}}(z)$ for $t\to t_0$ for every $z\in\bR$. Hence $QID(\bR)$ is path connected. Finally, observe that path-connectness implies connectness, see [\ref{Armstrong}, Theorem 3.29, p. 61].
\end{proof}
\section*{Acknowledgement:} The author would like to thank Alexander Lindner for introducing him to this topic, giving him the opportunity to work on it and for many interesting and fruitful discussions.

\vspace{1cm}
David Berger\\
 Ulm University, Institute of Mathematical Finance, Helmholtzstra{\ss}e 18, 89081 Ulm,
Germany\\
email: david.berger@uni-ulm.de
\end{document}